\numberwithin{equation}{section}
\newtheorem{theorem}{Theorem}
\newtheorem{lemma}[theorem]{Lemma}
\theoremstyle{remark}
\newtheorem{remark}[theorem]{Remark}
\newtheorem*{ackn}{Acknowledgement}
\newcommand{\abs}[1]{\lvert#1\rvert}
\newcommand{\biggabs}[1]{\biggl\lvert#1\biggr\rvert}
\newcommand{\norm}[1]{\lVert#1\rVert}
\newcommand{\bignorm}[1]{\bigl\lVert#1\bigr\rVert}
\newcommand{\BMOA}{\mathit{BMOA}}
\newcommand{\VMOA}{\mathit{VMOA}}
\newcommand{\Bloch}{\mathcal{B}}
\newcommand{\LMOA}{\mathit{LMOA}}
\newcommand{\LVMOA}{\mathit{LMOA}_0}
\newcommand{\D}{\mathbb{D}}
\newcommand{\T}{\mathbb{T}}
\newcommand{\C}{\mathbb{C}}
\DeclareMathOperator{\dist}{dist}
\title[Essential norms of integration operators]
{Weak compactness and essential norms\\of integration
operators}
\author[Laitila]{Jussi Laitila}
\address{ISER, University of Essex, Colchester CO4 3SQ, United Kingdom}
\email{jlaitila@essex.ac.uk}
\author[Miihkinen]{Santeri Miihkinen}
\address{Department of Mathematics and Statistics, University of
Helsinki, Box~68, FI-00014 Helsinki, Finland}
\email{santeri.miihkinen@helsinki.fi}
\author[Nieminen]{Pekka J.\ Nieminen}
\address{Department of Mathematics and Statistics, University of
Helsinki, Box~68, FI-00014 Helsinki, Finland}
\email{pjniemin@cc.helsinki.fi}
\subjclass[2010]{Primary 47B38; Secondary 30H10, 30H35, 47B07.}
\date{21 January 2011}
\thanks{The second author was supported by the Academy of Finland,
project 134757.}
\begin{document}

\begin{abstract}
Let $g$ be an analytic function on the unit disc and consider
the integration operator of the form
$T_g f(z) = \int_0^z fg'\,d\zeta$. We show that on
the spaces $H^1$ and $\BMOA$ the operator $T_g$ is
weakly compact if and only if it is compact. In the case of
$\BMOA$ this answers a question of Siskakis and Zhao.
More generally, we estimate the essential and weak essential
norms of $T_g$ on $H^p$ and $\BMOA$.
\end{abstract}

\maketitle

\section{Introduction}

Let $\D$ be the open unit disc in the complex plane $\C$ and
$g\colon \D \to \C$ an analytic function. We consider the
generalized Volterra integration operator $T_g$ defined by
\[
   T_gf(z) = \int_0^z f(\zeta)g'(\zeta)\,d\zeta, \qquad z \in \D,
\]
for functions $f$ analytic in $\D$. As special cases this
includes the classical Volterra operator for $g(z) = z$
and the Cesàro operator for $g(z) = -\log(1-z)$.

In the general form such operators were first introduced by
Pommerenke~\cite{Pommerenke:schfaf} to study exponentials of
$\BMOA$ functions. He observed, in particular, that $T_g$ is
bounded on the Hardy space $H^2$
if and only if $g$ belongs to $\BMOA$, the space of
analytic functions with bounded mean oscillation on the
unit circle. A detailed study of the operators $T_g$
was later initiated by Aleman and Siskakis \cite{Aleman:intoh},
who showed that
Pommerenke's boundedness characterization is valid on each
$H^p$ for $1 \leq p < \infty$ and that $T_g$ is
compact on $H^p$ if and only if $g \in \VMOA$.

Subsequently a number of authors have extended this line of research to
a variety of other spaces and contexts; we refer the reader to the
surveys \cite{Aleman:somopc,Siskakis:volosa} for more information
and further references. In particular,
in \cite{Siskakis:voltos} Siskakis and Zhao considered
$T_g$ as an operator acting on $\BMOA$ and
characterized its boundedness and compactness in terms
of logaritmically weighted $\BMOA$ and $\VMOA$ conditions
placed on $g$ (see below for precise statements).

The main purpose of this paper is to address the
\emph{weak compactness} of $T_g$ on $H^1$ and
$\BMOA$. We will namely show that on each of these spaces
$T_g$ is weakly compact precisely when it is compact.
In the setting of $\BMOA$ this result provides a negative
answer to a question posed by Siskakis and Zhao in
\cite[Sec.~3]{Siskakis:voltos}.
More generally, we will derive estimates for the essential and
weak essential
norms of $T_g$ on $H^p$ and $\BMOA$, extending an earlier
result of Rättyä~\cite{Rattya} for the $H^2$ case.
These results are contained in Theorems \ref{thm:Hardy}
and~\ref{thm_bmoa} below.

Recall here that a linear operator $T$ on a Banach space is
weakly compact if it maps the unit ball of the space into a set
whose closure is compact in the weak topology.
The essential and weak essential norms of $T$, denoted
by $\norm{T}_e$ and $\norm{T}_w$, are the distances of $T$
(in the operator norm) from the closed ideals of compact and weakly
compact operators, respectively.

As usual, we define $\BMOA$
as the space of analytic functions $g\colon \D \to \C$ such that
\begin{equation} \label{eq:semi}
   \norm{g}_* = \sup_{a\in\D}\norm{g\circ\sigma_a-g(a)}_{H^2} < \infty,
\end{equation}
where $\norm{\ }_{H^2}$ is the standard norm of $H^2$ and
$\sigma_a(z) = (a-z)/(1-\bar{a}z)$ is the conformal automorphism of
$\D$ that interchanges $0$ and $a$. Equivalently, $g \in H^2$ and
the boundary values
of $g$ have bounded mean oscillation on the unit circle.
Introducing the norm $\abs{g(0)} + \norm{g}_*$ makes $\BMOA$ a
Banach space. Its closed subspace $\VMOA$ consists of
those $g$ for which
$\norm{g\circ\sigma_a-g(a)}_{H^2} \to 0$ as
$\abs{a} \to 1$. For detailed accounts on $\BMOA$ and $\VMOA$
we refer the reader to \cite{Baernstein,Garnett:bouaf,Girela:anafbm}.

Throughout the paper we use the notation $A \lesssim B$
to indicate that $A \le cB$ for some positive constant $c$ whose
value may change from one occurrence into another and which
may depend on $p$.
If $A \lesssim B$ and $B \lesssim A$, we say that the quantities
$A$ and $B$ are equivalent and write $A \simeq B$.

\begin{theorem} \label{thm:Hardy}
Let $g \in \BMOA$. Then, for $1 \leq p < \infty$,
\[
   \norm{T_g\colon H^p \to H^p}_e \simeq \dist(g,\VMOA),
\]
and
\[
   \norm{T_g\colon H^1 \to H^1}_w \simeq \dist(g,\VMOA).
\]
In particular, $T_g$ is weakly compact on $H^1$
if and only if it is compact, or equivalently, $g \in \VMOA$.
\end{theorem}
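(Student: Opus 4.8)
Throughout put $L=\limsup_{\abs{a}\to1}\norm{g\circ\sigma_a-g(a)}_{H^2}$; since $\VMOA$ is the closure of the polynomials in $\BMOA$, one has the standard comparison $L\simeq\dist(g,\VMOA)$, so the theorem reduces to comparing the relevant essential norms with $L$.

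\emph{Upper bounds.} As constants belong to $\VMOA$, passing from $h\in\VMOA$ to $h-h(0)+g(0)$ shows $\dist(g,\VMOA)=\inf_{h\in\VMOA}\norm{g-h}_*$. Fix $h\in\VMOA$ with $\norm{g-h}_*\le\dist(g,\VMOA)+\varepsilon$. By \cite{Aleman:intoh} the operator $T_h$ is compact on every $H^p$, $1\le p<\infty$, and $T_g-T_h=T_{g-h}$ with $\norm{T_{g-h}\colon H^p\to H^p}\lesssim\norm{g-h}_*$; hence $\norm{T_g\colon H^p\to H^p}_e\lesssim\dist(g,\VMOA)$. Since compact operators are weakly compact, $\norm{T_g\colon H^1\to H^1}_w\le\norm{T_g\colon H^1\to H^1}_e\lesssim\dist(g,\VMOA)$ as well.

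\emph{Lower bound for the essential norm, $1<p<\infty$.} Take $f_a(z)=(1-\abs{a}^2)^{1/p}(1-\bar az)^{-2/p}$, so that $\norm{f_a}_{H^p}=1$, while $f_a\to0$ uniformly on compacta, hence weakly by reflexivity of $H^p$. The core estimate is $\norm{T_gf_a}_{H^p}\gtrsim\norm{g\circ\sigma_a-g(a)}_{H^2}$, which I would establish from a Lusin area-function description of $\norm{\ }_{H^p}$ together with the conformal substitutions $z=\sigma_a(w)$ (inside) and $\zeta=\sigma_a(\eta)$ (in the outer integral over $\T$): because $(T_gf_a)'=f_ag'$ and $g'\circ\sigma_a\cdot\sigma_a'=(g\circ\sigma_a)'$, the Jacobian and the factor $f_a\circ\sigma_a$ combine so that the area function of $T_gf_a$ transforms into that of $g\circ\sigma_a-g(a)$, giving $\norm{T_gf_a}_{H^p}\simeq\norm{g\circ\sigma_a-g(a)}_{H^p}\simeq\norm{g\circ\sigma_a-g(a)}_{H^2}$ (the last step by John--Nirenberg). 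Choosing $a_n$ with $\abs{a_n}\to1$ and $\norm{g\circ\sigma_{a_n}-g(a_n)}_{H^2}\to L$, and using that a compact $K$ maps $f_{a_n}\to0$ in norm, we get $\norm{T_g-K}\ge\limsup_n\norm{T_gf_{a_n}}_{H^p}\gtrsim L$ for all compact $K$, i.e.\ $\norm{T_g\colon H^p\to H^p}_e\simeq\dist(g,\VMOA)$.

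\emph{Lower bound for the weak essential norm on $H^1$.} Reflexivity now fails, so I would use the Dunford--Pettis criterion: a bounded subset of $L^1(\T)$ is relatively weakly compact iff it is uniformly integrable. With $k_a(z)=(1-\abs{a}^2)(1-\bar az)^{-2}$ ($\norm{k_a}_{H^1}=1$) and $E_a\subset\T$ the arc of length $\simeq 1-\abs{a}$ centred at $a/\abs{a}$, the crux is the \emph{localized} bound
\[
   \int_{E_a}\abs{T_gk_a}\,\abs{d\zeta}\gtrsim\norm{g\circ\sigma_a-g(a)}_{H^2}
\]
along a suitable sequence $\abs{a}\to1$. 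Since $(T_gk_a)'=k_ag'$ is dictated by the behaviour of $g$ near $a/\abs{a}$, the antiderivative $T_gk_a$ should keep a definite fraction of its $L^1(\T)$-mass on the shrinking arc $E_a$ once the oscillation of $g$ is genuinely concentrated there, while cancellation keeps $T_gk_a$ small away from $a/\abs{a}$. To arrange the concentration I would choose $a_n$ realizing the limsup in the \emph{local} Carleson-box (equivalently, mean-oscillation) description of $\dist(g,\VMOA)$. The estimate itself should then follow either by restricting the area function of $T_gk_{a_n}$ to Stolz angles with vertex in $E_{a_n}$ and keeping the full Carleson-box mass of $\abs{g'}^2(1-\abs{z}^2)\,dA$, or by transporting everything by $\sigma_{a_n}$ and analysing the identity $(T_gk_{a_n})\circ\sigma_{a_n}(w)=(1-\abs{a_n}^2)^{-1}\int_0^w(1-\overline{a_n}v)^2(g\circ\sigma_{a_n})'(v)\,dv$ over the arc $\sigma_{a_n}(E_{a_n})$, a fixed-size neighbourhood of $-a_n/\abs{a_n}$ on which $g\circ\sigma_{a_n}$ is concentrated. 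Granting this, for weakly compact $S$ on $H^1$ the set $\{Sf:\norm{f}_{H^1}\le1\}$ is uniformly integrable, so $\int_{E_{a_n}}\abs{Sk_{a_n}}\,\abs{d\zeta}\to0$, whence
\[
   \norm{T_g-S}\ge\limsup_n\int_{E_{a_n}}\abs{(T_g-S)k_{a_n}}\,\abs{d\zeta}=\limsup_n\int_{E_{a_n}}\abs{T_gk_{a_n}}\,\abs{d\zeta}\gtrsim L .
\]
Thus $\norm{T_g\colon H^1\to H^1}_w\gtrsim L$; with the upper bound this gives $\norm{T_g\colon H^1\to H^1}_w\simeq\dist(g,\VMOA)$, and $p=1$ of the essential-norm formula follows from $\norm{\ }_e\ge\norm{\ }_w$. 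The last assertion is then formal: $T_g$ weakly compact on $H^1$ $\iff\norm{T_g}_w=0\iff\dist(g,\VMOA)=0\iff g\in\VMOA\iff T_g$ compact, by \cite{Aleman:intoh}. I expect the localized concentration estimate for $T_gk_a$ — selecting the $a_n$ so that the oscillation of $g$ localizes, and quantifying how much of it is retained on the vanishing arc — to be the main obstacle; the unlocalized $H^p$ estimate is comparatively routine once the change of variables is in place.
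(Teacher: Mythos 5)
Your architecture matches the paper's: upper bounds from the linearity $T_g-T_h=T_{g-h}$ with $h\in\VMOA$; the lower bound for $1<p<\infty$ from the weakly null test functions $f_a$ together with a pointwise estimate $\norm{T_gf_a}_{H^p}\gtrsim\norm{g\circ\sigma_a-g(a)}_{H^q}$ (this is exactly the Aleman--Cima theorem, which the paper simply cites rather than re-derives --- note that their version only gives $q<p/2$, and your claimed two-sided equivalence with $q=p$ is stronger than what the change-of-variables sketch actually delivers, though this is harmless since John--Nirenberg makes all the $H^q$-norms of $g\circ\sigma_a-g(a)$ comparable); and the $H^1$ weak essential norm from the Dunford--Pettis uniform-integrability criterion plus a localization of $T_gf_a$ on a shrinking arc. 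The distance formula $\dist(g,\VMOA)\simeq\limsup_{\abs{a}\to1}\norm{g\circ\sigma_a-g(a)}_{H^q}$ is the paper's Lemma~\ref{le:dist}.

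The genuine gap is the localization step, which you yourself flag as ``the main obstacle'' and leave unproven. Two points. First, your formulation asks for a \emph{lower} bound $\int_{E_a}\abs{T_gk_a}\,dm\gtrsim\norm{g\circ\sigma_a-g(a)}_{H^2}$ on an arc $E_a$ of length $\simeq 1-\abs{a}$. This is both harder than necessary and probably too ambitious at that scale: already the kernel $k_a$ itself leaves a fixed positive fraction of its $L^1$-mass outside any arc of length $C(1-\abs{a})$, and there is no reason the oscillation of $g$ contributing to $\norm{T_gk_a}_{H^1}$ should concentrate at the scale $1-\abs{a}$ rather than spreading over many dyadic scales. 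Second, the efficient route --- the one the paper takes in Lemma~\ref{le:Arc} --- is to prove the \emph{complementary upper} bound instead: taking the larger (but still vanishing) arc $I(a)$ of length $\simeq(1-\abs{a})^{1/6}$, one shows $\int_{\T\setminus I(a)}\abs{T_gk_a}\,dm\to0$ by integrating $(T_gk_a)(\zeta)=\int_0^1 k_a(r\zeta)g'(r\zeta)\zeta\,dr$ by parts and using the decay $\abs{k_a}\lesssim(1-\abs{a})^{2/3}$, $\abs{k_a'}\lesssim(1-\abs{a})^{1/2}$ off $I(a)$ together with the logarithmic growth of $g$. This reduces the localized lower bound you need to the \emph{global} one $\norm{T_gk_a}_{H^1}\gtrsim\norm{g\circ\sigma_a-g(a)}_{H^{1/4}}$, which is again just Aleman--Cima, and no selection of special $a_n$ realizing a local Carleson-box limsup is required. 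Since $m(I(a))\to0$ still holds, the uniform-integrability argument against a weakly compact $S$ goes through unchanged. Without this (or some substitute for your unproven concentration estimate), your proof of $\norm{T_g\colon H^1\to H^1}_w\gtrsim\dist(g,\VMOA)$ is incomplete.
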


The \emph{logarithmic $\BMOA$ space}, denoted here by $\LMOA$, consists
of those analytic functions $g\colon \D \to \C$ for which
\begin{equation} \label{eq:logsemi}
   \norm{g}_{*,\log} = \sup_{a\in\D}\lambda(a)
      \norm{g\circ\sigma_a-g(a)}_{H^2} < \infty,
\end{equation}
where $\lambda(a)=\log (2/(1-\abs{a}))$.
It is a Banach space under the norm
$\abs{g(0)} + \norm{g}_{*,\log}$.
The \emph{logarithmic $\VMOA$ space}, denoted by
$\LVMOA$, is defined by the corresponding ``little-oh'' condition.
Note that $\LMOA \subset \VMOA$.

Siskakis and Zhao \cite{Siskakis:voltos} proved
that $T_g$ is bounded (resp.\ compact) on $\BMOA$, or equivalently
on $\VMOA$, if and only if $g \in \LMOA$ (resp.\ $g \in \LVMOA$).
Alternative proofs can be found in \cite{MacCluer:vanlcm,Zhao:logcm}.
We extend these results as follows.

\begin{theorem}\label{thm_bmoa}
Let $g\in \LMOA$. Then $T_g\colon \BMOA \to \BMOA$ satisfies
\[
   \norm{T_g}_e \simeq \norm{T_g}_w \simeq \dist(g,\LVMOA).
\]
In particular, $T_g$ is weakly compact on $\BMOA$ if and
only if it is compact, or equivalently, $g \in \LVMOA$.
The same estimates hold for the restriction
$T_g\colon \VMOA \to \VMOA$.
\end{theorem}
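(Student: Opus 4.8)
Since every compact operator is weakly compact, $\norm{T_g}_w\le\norm{T_g}_e$ always, so Theorem~\ref{thm_bmoa} reduces to the upper bound $\norm{T_g\colon\BMOA\to\BMOA}_e\lesssim\dist(g,\LVMOA)$ together with the lower bound $\norm{T_g\colon\BMOA\to\BMOA}_w\gtrsim\dist(g,\LVMOA)$, and the analogous pair for $\VMOA$; here $\dist$ is taken with respect to the $\LMOA$ norm. The upper bound is routine: for $h\in\LVMOA$ the operator $T_h$ is compact on $\BMOA$ (and on $\VMOA$) by the Siskakis--Zhao theorem and $\norm{T_{g-h}\colon\BMOA\to\BMOA}\lesssim\norm{g-h}_{*,\log}$ by their boundedness estimate, whence $\norm{T_g}_e\le\norm{T_{g-h}}\lesssim\norm{g-h}_{*,\log}$; taking the infimum over $h\in\LVMOA$ gives the claim, and the same works on $\VMOA$. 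One also checks that $\norm{T_g\colon\BMOA\to\BMOA}_e$ and $\norm{T_g\colon\VMOA\to\VMOA}_e$ are comparable: ``$\le$'' because $(T_g^{\VMOA})^{**}=T_g^{\BMOA}$ and biduals of compact operators are compact, and ``$\ge$'' because a compact operator $\VMOA\to\BMOA$ that is close to $T_g^{\VMOA}$ can, through its finite-rank approximants, be pushed into a compact operator $\VMOA\to\VMOA$ at the cost of a comparable error. Thus everything comes down to the lower bound for $\norm{T_g}_w$.

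The plan for that is as follows. First record the logarithmic Garnett-type distance formula
\[
   \dist(g,\LVMOA)\simeq\limsup_{\abs{a}\to1}\lambda(a)\norm{g\circ\sigma_a-g(a)}_{H^2},
\]
whose ``$\lesssim$'' half is immediate from the definitions and whose ``$\gtrsim$'' half follows by approximating $g$ with its dilations $g_r$, exactly as in the classical identity $\dist(g,\VMOA)\simeq\limsup_{\abs{a}\to1}\norm{g\circ\sigma_a-g(a)}_{H^2}$. Assuming $g\notin\LVMOA$, put $\delta:=\limsup_{\abs{a}\to1}\lambda(a)\norm{g\circ\sigma_a-g(a)}_{H^2}>0$ and pick points $a_n\to\T$ with $\lambda(a_n)\norm{g\circ\sigma_{a_n}-g(a_n)}_{H^2}\ge\delta/2$ and, after passing to a subsequence, with $1-\abs{a_{n+1}}$ so small that the Carleson boxes over the arcs of length $\simeq1-\abs{a_n}$ centred at $a_n/\abs{a_n}$ are pairwise far apart. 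Take the logarithmic test kernels $f_n(z)=\log\bigl(2/(1-\bar a_n z)\bigr)$: these satisfy $\norm{f_n}_{\BMOA}\lesssim1$, lie in $\VMOA$, tend to $0$ uniformly on compact subsets of $\D$ (hence weakly in $\VMOA$ and weak-$*$ in $\BMOA$), and, by the separation of the boxes, form a sequence equivalent to the unit vector basis of $c_0$. The crucial analytic fact is that $\norm{T_g f_n}_{\BMOA}\gtrsim\delta$ for all large $n$: after the substitution $w\mapsto\sigma_{a_n}(w)$ one finds that $(T_g f_n)\circ\sigma_{a_n}-(T_g f_n)(a_n)$ equals $\lambda(a_n)\bigl(g\circ\sigma_{a_n}-g(a_n)\bigr)$ plus correction terms built from $\log(1-\bar a_n w)$, and, testing the $\BMOA$ seminorm at a point adapted to the scale at which the oscillation of $g$ near $a_n$ concentrates --- a scale which stays bounded because $\norm{g}_{*,\log}<\infty$ --- the leading term, carrying the diverging factor $\lambda(a_n)$, wins. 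Moreover, forming $f:=C^{-1}\sum_n f_n\in B_{\BMOA}$ and checking that the oscillation of $T_g f_n$ near $a_m$ is negligible for $m\ne n$, one gets $\dist\bigl(T_g^{\BMOA}f,\VMOA\bigr)\gtrsim\delta$, hence $\dist\bigl(T_g^{\BMOA}(B_{\BMOA}),\VMOA\bigr)\gtrsim\delta$. I expect that making these estimates precise --- keeping the lower bound comparable to $\delta$ rather than to a smaller power of it --- is one of the two main obstacles.

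It remains to turn this into $\norm{T_g}_w\gtrsim\delta$. On $\VMOA$ this is clean: if $K$ is weakly compact on $\VMOA$, then by Gantmacher's theorem $K^{**}$ is weakly compact on $\BMOA=\VMOA^{**}$, maps $\BMOA$ into $\VMOA$, and satisfies $\norm{T_g^{\BMOA}-K^{**}}=\norm{T_g^{\VMOA}-K}$ (because $(T_g^{\VMOA})^{**}=T_g^{\BMOA}$); since $K^{**}f\in\VMOA$ for every $f\in B_{\BMOA}$, this yields $\dist\bigl(T_g^{\BMOA}(B_{\BMOA}),\VMOA\bigr)\le\norm{T_g^{\VMOA}-K}$, and the infimum over $K$ gives $\norm{T_g^{\VMOA}}_w\ge\dist\bigl(T_g^{\BMOA}(B_{\BMOA}),\VMOA\bigr)\gtrsim\delta$. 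Together with the upper bound this settles the $\VMOA$ case. The $\BMOA$ case is the second and harder obstacle, since this biduality trick is unavailable: $\VMOA$ is not complemented in $\BMOA$, so one cannot simply transfer a weakly compact operator on $\BMOA$ to one on $\VMOA$. Here one must argue directly that no weakly compact $K$ on $\BMOA$ can keep $T_g^{\BMOA}f$ (equivalently the weak-$*$ null sequence $(T_g f_n)$) a definite distance from $\VMOA$ --- for instance by composing with the quotient map $q\colon\BMOA\to\BMOA/\VMOA$ and showing that $q\circ T_g$ is bounded away, in operator norm, from the weakly compact operators $\BMOA\to\BMOA/\VMOA$, or by a Dunford--Pettis-type estimate exploiting the disjointly supported structure of the $f_n$ together with the fact that the predual $H^1$ of $\BMOA$ has the Dunford--Pettis property. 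Once the $\BMOA$ lower bound is in hand, combining it with $\norm{T_g}_w\le\norm{T_g}_e\simeq\dist(g,\LVMOA)$ completes the proof.
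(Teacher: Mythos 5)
Your upper bound is exactly the paper's, and your $\VMOA$ lower-bound route (Gantmacher plus $(T_g^{\VMOA})^{**}=T_g^{\BMOA}$, reducing to a lower bound for $\dist(T_g(B_{\BMOA}),\VMOA)$) is a legitimate alternative in that case. But the proposal has a genuine gap precisely where you flag it: the lower bound for $\norm{T_g}_w$ on $\BMOA$ itself is never proved. You offer two vague directions (a quotient-map argument, or a Dunford--Pettis estimate using the predual $H^1$), neither of which is carried out, and the second is aimed at the wrong space: the Dunford--Pettis property that does the work is that of $c_0$, not of $H^1$. The paper's resolution is to build an isomorphic copy of $c_0$ inside $\VMOA$ spanned by the test functions and pull any weakly compact $S$ on $\BMOA$ back along that embedding $\iota$: then $S\circ\iota$ is weakly compact on $c_0$, hence maps the weakly null unit vector basis to a norm-null sequence, which forces $\norm{Sh_{n_j}}_*\to 0$ and gives $\norm{T_g-S}\gtrsim\sqrt{\alpha}$ directly, with no biduality and no complementation of $\VMOA$ needed. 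This single mechanism covers $\BMOA$ and $\VMOA$ simultaneously.

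Two further points in your construction would fail as stated. First, your kernels $f_n(z)=\log\bigl(2/(1-\bar a_n z)\bigr)$ are \emph{not} equivalent to the $c_0$ basis merely by ``separation of the boxes'': the Le{\u\i}bov-type extraction (Lemma~\ref{lemma_leibov}) requires $\norm{f_n}_{H^2}\to 0$, whereas $\norm{f_n}_{H^2}$ stays bounded away from zero. The paper therefore passes to differences $h_n=f_{n+1}-f_n$ with the base points chosen Cauchy, so that $\norm{h_n}_{H^2}\to 0$ while $\norm{T_gh_n}_*$ stays bounded below; the latter needs both the lower estimate $\mu(T_gf_n,I_n)/\abs{I_n}\gtrsim\alpha$ and an upper estimate $\mu(T_gf_n,I_{n+1})/\abs{I_{n+1}}\le c\alpha/16$ on the \emph{next} box, obtained from $T_gf_n\in\VMOA$ after a further subsequence. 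Second, your key analytic claim $\norm{T_gf_n}_*\gtrsim\delta$ is only sketched (``the leading term wins''), and you yourself call it an obstacle; the paper avoids the conformal-substitution bookkeeping entirely by working with the Carleson-measure characterization \eqref{eq_bmoa_char}--\eqref{eq:LMOAequiv}, where the estimate reduces to the pointwise bound $\abs{f_n(z)}\ge c\log(2/\abs{I_n})$ on $S(I_n)$. As it stands, the proposal establishes the upper bound and a conditional $\VMOA$ lower bound, but not the theorem.
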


Observe that the distances in Theorems \ref{thm:Hardy}
and~\ref{thm_bmoa} can be calculated in terms of the respective
seminorms \eqref{eq:semi} and \eqref{eq:logsemi} because
the constant functions belong to $\VMOA$ and $\LVMOA$.
There are various function-theoretic formulas for estimating such
distances and we collect a pair of these in Section~\ref{sec:Pre}.
The proof of Theorem~\ref{thm:Hardy} is then given in
Section~\ref{sec:Hardy} and the proof of Theorem~\ref{thm_bmoa}
in Section~\ref{sec:BMOA}.

\begin{remark}
The weak compactness of generalized Volterra operators has
previously been considered
in the setting of the little Bloch space $\Bloch_0$; see e.g.\
Hu~\cite{Hu:extcob}. Recall, however, that on $\Bloch_0$,
as well as on the Bergman space $A^1$, all weakly compact operators
are compact because $A^1$ is the dual of $\Bloch_0$ and (being
isomorphic to $\ell^1$) it has the Schur property (see e.g.\
\cite[Chap.\ 4 and~5]{Zhu}). Moreover,
since $T_g$ acting on the Bloch space $\Bloch$ can be viewed as
the biadjoint of its restriction to $\Bloch_0$, it follows that
the weak compactness of $T_g$ is equivalent to
its compactness even on $\Bloch$.

Note, on the contrary, that since $H^1$ and $\BMOA$ are non-reflexive
spaces containing complemented copies of the Hilbert space $\ell^2$
(see e.g.\ \cite[Sec.~9]{Girela:anafbm}), they admit bounded operators
that are not weakly compact and weakly compact operators that are not
compact.
\end{remark}

\section{Distance formulas}
\label{sec:Pre}

There exist various function-theoretic quantities for estimating the
distance of a general $\BMOA$ function from $\VMOA$.
Typically they involve a limsup version of an expression
defining (or equivalent to) the $\BMOA$ norm (see
e.g.\ \cite{CarmonaCufi,StegengaStephenson}). One version
is furnished by Lemma~\ref{le:dist} below. For completeness we
briefly sketch its proof, especially because the arguments in
\cite{CarmonaCufi,StegengaStephenson} do not seem to be
directly adaptable to exponents in the scale $0 < p < 1$,
which will be important to us in Section~\ref{sec:Hardy}.

Recall here that every function $g \in \BMOA$ satisfies a ``reverse
Hölder inequality'' which implies that for each $0 < p < \infty$,
\begin{equation}\label{eq:RH}
   \norm{g}_* \simeq \sup_{a\in\D} \norm{g\circ\sigma_a-g(a)}_{H^p},
\end{equation}
where the proportionality constants depend on $p$.
Likewise, $g \in \VMOA$ if and only if
$\norm{g\circ\sigma_a-g(a)}_{H^p} \to 0$ as $\abs{a} \to 1$.
(See e.g.\ \cite[Corollary 3]{Baernstein}.)

\begin{lemma} \label{le:dist}
For $g \in \BMOA$ and $0 < p < \infty$,
\[
   \dist(g,\VMOA) \simeq
   \limsup_{\abs{a}\to 1} \norm{g\circ\sigma_a - g(a)}_{H^p}.
\]
\end{lemma}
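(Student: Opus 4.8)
The plan is to prove the two inequalities separately, using \eqref{eq:RH} to move freely between exponents. For the easy direction, $\limsup_{|a|\to1}\norm{g\circ\sigma_a-g(a)}_{H^p}\lesssim\dist(g,\VMOA)$, I would fix any $h\in\VMOA$ and write $g\circ\sigma_a-g(a)=(g-h)\circ\sigma_a-(g-h)(a)+(h\circ\sigma_a-h(a))$. Taking $H^p$-norms, the first two terms are bounded by $\norm{g-h}_{*}$ up to a constant (via \eqref{eq:RH}), while the last term tends to $0$ as $|a|\to1$ since $h\in\VMOA$ (again by the $H^p$-version of the $\VMOA$ condition recorded after \eqref{eq:RH}). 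Hence $\limsup_{|a|\to1}\norm{g\circ\sigma_a-g(a)}_{H^p}\lesssim\norm{g-h}_*$ for every $h\in\VMOA$, and taking the infimum over $h$ gives the bound; note $\dist(g,\VMOA)\simeq\inf_h\norm{g-h}_*$ since constants lie in $\VMOA$.

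For the reverse inequality I would use the standard dilation/truncation construction: for $0<r<1$ let $g_r(z)=g(rz)$, which belongs to $\VMOA$ (indeed to the disc algebra). The goal is to show $\norm{g-g_r}_*\lesssim\sup_{|a|>r}\norm{g\circ\sigma_a-g(a)}_{H^p}$ (plus a term that vanishes as $r\to1$), which upon letting $r\to1$ yields $\dist(g,\VMOA)\le\limsup_r\norm{g-g_r}_*\lesssim\limsup_{|a|\to1}\norm{g\circ\sigma_a-g(a)}_{H^p}$. To estimate $\norm{g-g_r}_*$ one tests against $\sigma_a$: split into the case $|a|\le r$ and $|a|>r$. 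For $|a|$ close to $1$ (specifically $|a|>\sqrt r$, say) one shows $\norm{(g-g_r)\circ\sigma_a-(g-g_r)(a)}_{H^p}$ is controlled by $\sup_{|b|\ge\rho}\norm{g\circ\sigma_b-g(b)}_{H^p}$ with $\rho\to1$ as $r\to1$, because $\sigma_a$ then maps a large portion of $\D$ near the boundary; the dilation $g_r$ essentially reproduces $g$ composed with a nearby automorphism. For the remaining $|a|$ in a compact subset of $\D$, $g-g_r\to0$ uniformly on compacta, so that contribution is $o(1)$ as $r\to1$.

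The main obstacle is making the case $|a|>\rho$ rigorous: one must quantify how $(g-g_r)\circ\sigma_a$ relates to oscillations of $g$ over automorphism images near the boundary. The cleanest route is probably to avoid dilations of $g$ directly and instead use the Garsia-type/$\BMOA$-seminorm characterization via Carleson measures or the Poisson integral, truncating the defining measure $|g'(z)|^2(1-|z|^2)\,dA(z)$ to the annulus $r<|z|<1$ and showing the truncated Carleson norm is comparable to $\sup_{|a|>\rho}\norm{g\circ\sigma_a-g(a)}^2_{H^2}$; the subharmonicity/reverse-Hölder step \eqref{eq:RH} then lets one pass to a general exponent $p$, which is exactly the feature not available in the cited references. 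I would carry out the annular Carleson-measure estimate as the technical core, then assemble the two inequalities as above.
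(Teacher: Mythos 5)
Your easy direction is fine and matches the paper. The problem is the hard direction, and it sits exactly where you locate the ``main obstacle'': you set up the dilation approach correctly but never actually control $\norm{g_r\circ\sigma_a-g_r(a)}_{H^p}$ for $\abs{a}$ near $1$, and the fallback you propose does not close the gap. The paper's resolution is the identity
$g_r\circ\sigma_a-g_r(a)=[g\circ\sigma_{ra}-g(ra)]\circ\psi_{r,a}$ with $\psi_{r,a}=\sigma_{ra}\circ r\sigma_a$ an analytic self-map of $\D$ fixing $0$, so that Littlewood's subordination theorem gives the pointwise-in-$a$ bound $\norm{g_r\circ\sigma_a-g_r(a)}_{H^p}\le\norm{g\circ\sigma_{ra}-g(ra)}_{H^p}$ \emph{in every} $H^p$, $0<p<\infty$; this is precisely the quantitative statement your sketch needs and does not supply.

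Your proposed ``cleanest route'' --- truncate the Carleson measure $\abs{g'}^2(1-\abs{z}^2)\,dA$ to an annulus, compare with $\sup_{\abs{b}\ge\rho}\norm{g\circ\sigma_b-g(b)}_{H^2}^2$, and then ``pass to a general exponent $p$ via \eqref{eq:RH}'' --- fails for $0<p<2$, which is the range that actually matters (the application in Section~\ref{sec:Hardy} uses $q=p/4<1$). The reverse-H\"older equivalence \eqref{eq:RH} compares \emph{full suprema over} $a\in\D$; it does not give $\limsup_{\abs{a}\to1}\norm{g\circ\sigma_a-g(a)}_{H^2}\lesssim\limsup_{\abs{a}\to1}\norm{g\circ\sigma_a-g(a)}_{H^p}$ for $p<2$, and that limsup inequality is essentially the content of the lemma itself. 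Interpolation only yields inhomogeneous bounds of the form $\limsup\norm{\cdot}_{H^2}^2\lesssim(\limsup\norm{\cdot}_{H^p})^{p/2}\norm{g}_*^{2-p/2}$, which is too weak for the distance formula; indeed the paper remarks that the Carleson-measure arguments of Carmona--Cuf\'{\i} and Stegenga--Stephenson (the route you suggest) do not adapt to $0<p<1$, which is why the subordination proof is given. A minor additional slip: for $\abs{a}\le\eta$ you invoke uniform convergence of $g-g_r$ on compacta, but the quantity to control is an $H^p$ norm of $(g-g_r)\circ\sigma_a$ on $\T$ (and $\sigma_a(\T)=\T$ is not a compact subset of $\D$); what is needed, and what the paper uses, is convergence $g_r\circ\sigma_a-g_r(a)\to g\circ\sigma_a-g(a)$ in $H^p$, uniformly for $\abs{a}\le\eta$.
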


\begin{proof}
The lower estimate for $\dist(g,\VMOA)$ is an easy consequence of
\eqref{eq:RH} and the corresponding characterization of $\VMOA$
functions.

To prove the upper estimate, one approximates $g$ by the
$\VMOA$ functions $g_r(z) = g(rz)$ for $0 < r < 1$.
Fix $0 < \eta < 1$. It is easy to check that
$g_r\circ\sigma_a - g_r(a)$ converges to
$g\circ\sigma_a - g(a)$ in $H^p$ uniformly for $\abs{a} \leq \eta$
as $r \to 1$.
Hence, by \eqref{eq:RH},
\[ \begin{split}
   \dist(g,\VMOA)
   &\leq \limsup_{r\to 1} \norm{g-g_r}_*  \\
   &\simeq \limsup_{r\to 1} \sup_{\abs{a}>\eta}
    \bignorm{[g\circ\sigma_a-g(a)] - [g_r\circ\sigma_a-g_r(a)]}_{H^p} \\
   &\leq \sup_{\abs{a}>\eta} \norm{g\circ\sigma_a-g(a)}_{H^p} +
    \sup_{r>\eta}\sup_{\abs{a}>\eta}
    \norm{g_r\circ\sigma_a-g_r(a)}_{H^p}.
\end{split} \]
We may write
\begin{equation} \label{eq:subord}
   g_r\circ\sigma_a - g_r(a)
   = [g\circ\sigma_{ra} - g(ra)] \circ\psi_{r,a},
\end{equation}
where $\psi_{r,a}=\sigma_{ra}\circ r\sigma_a$ is an analytic
self-map of $\D$ that fixes the origin. Therefore the
Littlewood subordination theorem (see e.g.\ \cite[Thm 1.7]{Duren})
yields $\norm{g_r\circ\sigma_a-g_r(a)}_{H^p}
\leq \norm{g\circ\sigma_{ra}-g(ra)}_{H^p}$. The upper
estimate follows as $\eta \to 1$.
\end{proof}

In the case of logarithmic $\BMOA$ we have the following
analogue. 

\begin{lemma} \label{le:logdist}
For $g \in \LMOA$,
\[
   \dist(g,\LVMOA) \simeq \limsup_{\abs{a}\to 1}
      \lambda(a)\norm{g\circ\sigma_a - g(a)}_{H^2}.
\]
\end{lemma}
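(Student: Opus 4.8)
The plan is to follow the proof of Lemma~\ref{le:dist}, replacing the plain $H^2$ seminorm by its $\lambda$-weighted version; the subtlety is that Littlewood subordination no longer respects the weights $\lambda(a)$. The lower estimate is routine: for $h\in\LVMOA$ and $a\in\D$,
\[
   \lambda(a)\norm{g\circ\sigma_a-g(a)}_{H^2}
   \le\norm{g-h}_{*,\log}+\lambda(a)\norm{h\circ\sigma_a-h(a)}_{H^2},
\]
and the last term tends to $0$ as $\abs{a}\to1$; taking the limsup and then the infimum over $h\in\LVMOA$ (and recalling that $\dist(g,\LVMOA)=\inf_{h\in\LVMOA}\norm{g-h}_{*,\log}$ since constants lie in $\LVMOA$) yields ``$\gtrsim$''.

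For the upper estimate I would set $L=\limsup_{\abs{a}\to1}\lambda(a)\norm{g\circ\sigma_a-g(a)}_{H^2}$ and approximate $g$ by the dilations $g_r(z)=g(rz)$, $0<r<1$. First one checks $g_r\in\LVMOA$: since $r\sigma_a$ takes values in $r\overline{\D}$, a path integral gives $\abs{g_r\circ\sigma_a(z)-g_r(a)}\le(\sup_{\abs{w}\le r}\abs{g'(w)})\,\abs{\sigma_a(z)-a}$, whence $\norm{g_r\circ\sigma_a-g_r(a)}_{H^2}\lesssim(\sup_{\abs{w}\le r}\abs{g'(w)})\sqrt{1-\abs{a}^2}$, and $\lambda(a)\sqrt{1-\abs{a}^2}\to0$. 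Since $g_r(0)=g(0)$, it follows that $\dist(g,\LVMOA)\le\norm{g-g_r}_{*,\log}$ for each $r$, so it is enough to bound $\norm{g-g_r}_{*,\log}$ by a fixed multiple of $L$ for a suitable $r$.

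To do this I would fix $\varepsilon>0$, choose $\eta\in(0,1)$ with $\lambda(a)\norm{g\circ\sigma_a-g(a)}_{H^2}<L+\varepsilon$ for $\abs{a}>\eta$, and split the supremum defining $\norm{g-g_r}_{*,\log}$ at $\abs{a}=\eta$. The part over $\abs{a}\le\eta$ is at most $\lambda(\eta)\norm{g-g_r}_*$, which tends to $0$ as $r\to1$ because $g\in\LMOA\subset\VMOA$ (as $\norm{g-g_r}_*\to0$ for $\VMOA$ functions, cf.\ the proof of Lemma~\ref{le:dist}), hence is $<\varepsilon$ for $r$ near $1$. On the part over $\abs{a}>\eta$ the triangle inequality gives $(L+\varepsilon)+\lambda(a)\norm{g_r\circ\sigma_a-g_r(a)}_{H^2}$, and here I would split the range $\abs{a}>\eta$ once more at the radius $\rho=\rho(r)$ defined by $1-\rho=(1-r)^3$ (so $\eta<\rho<1$ once $r$ is near $1$). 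For $\eta<\abs{a}<\rho$, the subordination identity \eqref{eq:subord} gives $\norm{g_r\circ\sigma_a-g_r(a)}_{H^2}\le\norm{g\circ\sigma_{ra}-g(ra)}_{H^2}$; moreover $1-r\abs{a}$ is comparable to $\max(1-\abs{a},1-r)$, which together with $1-\abs{a}>(1-r)^3$ forces $\lambda(a)\le4\lambda(ra)$ in this range, so, as $\abs{ra}>r\eta$,
\[
   \lambda(a)\norm{g_r\circ\sigma_a-g_r(a)}_{H^2}
   \le4\sup_{\abs{b}>r\eta}\lambda(b)\norm{g\circ\sigma_b-g(b)}_{H^2}<4(L+\varepsilon)
\]
for $r$ near $1$. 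For $\rho\le\abs{a}<1$ I would use the crude bound above together with $\abs{g'(w)}\lesssim\norm{g}_{*,\log}/((1-\abs{w})\lambda(w))$, valid for $g\in\LMOA$ (and giving $\sup_{\abs{w}\le r}\abs{g'(w)}\lesssim\norm{g}_{*,\log}/((1-r)\lambda(r))$), which yields $\lambda(a)\norm{g_r\circ\sigma_a-g_r(a)}_{H^2}\lesssim\norm{g}_{*,\log}\,\lambda(a)\sqrt{1-\abs{a}^2}/((1-r)\lambda(r))\lesssim\norm{g}_{*,\log}(1-r)^{1/2}$, the last step because $t\mapsto\lambda(t)\sqrt{1-t^2}$ is decreasing near $1$ and $1-\rho=(1-r)^3$. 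Combining the three estimates gives $\norm{g-g_r}_{*,\log}\lesssim L+\varepsilon$ for $r$ close to $1$, hence $\dist(g,\LVMOA)\lesssim L+\varepsilon$, and letting $\varepsilon\to0$ finishes the argument.

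The hard part is precisely the control of $\sup_{\abs{a}>\eta}\lambda(a)\norm{g_r\circ\sigma_a-g_r(a)}_{H^2}$: once it is multiplied by $\lambda(a)$, the subordination inequality that sufficed for Lemma~\ref{le:dist} is useless where $\abs{a}$ lies much closer to $\T$ than $r$ does, since there $\lambda(a)\gg\lambda(ra)$. Peeling off the thin annulus $\rho\le\abs{a}<1$ and estimating it there with the wasteful derivative bound repairs this, and it costs essentially nothing because $\lambda(a)\sqrt{1-\abs{a}^2}$ is tiny on that annulus.
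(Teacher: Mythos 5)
Your proof is correct, but it reaches the crucial estimate --- controlling $\sup_{\abs{a}>\eta}\lambda(a)\norm{g_r\circ\sigma_a-g_r(a)}_{H^2}$ --- by a genuinely different route from the paper. The paper applies the weighted subordination principle of \cite[Prop.~2.3]{Laitila:weicob} to the identity \eqref{eq:subord}, which gives $\norm{g_r\circ\sigma_a-g_r(a)}_{H^2}\lesssim\norm{\psi_{r,a}}_{H^2}\,\norm{g\circ\sigma_{ra}-g(ra)}_{H^2}$ with the explicit gain $\norm{\psi_{r,a}}_{H^2}^2=r^2(1-\abs{a}^2)/(1-r^4\abs{a}^2)$; since $s\mapsto\lambda(s)\sqrt{1-s^2}$ is decreasing near $1$, this factor converts $\lambda(a)$ into $\lambda(ra)$ uniformly in $a$ and $r$, with no case analysis. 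You use only classical (unweighted) Littlewood subordination, which is exactly why you need the extra split at $1-\rho=(1-r)^3$: plain subordination suffices where $1-\abs{a}$ is not too small relative to $1-r$ (so that $\lambda(a)\lesssim\lambda(ra)$), and on the remaining thin annulus you fall back on the pointwise bound $\abs{g'(w)}\lesssim\norm{g}_{*,\log}/((1-\abs{w})\lambda(w))$, the loss being absorbed by the smallness of $\lambda(a)\sqrt{1-\abs{a}^2}$ there. Your version is more elementary and self-contained (you also verify $g_r\in\LVMOA$ by hand instead of citing \cite[Lemma 3.5]{Siskakis:voltos}, and handle $\abs{a}\le\eta$ via $\norm{g-g_r}_*\to0$ rather than uniform convergence of $g_r\circ\sigma_a-g_r(a)$ for $\abs{a}\le\eta$); the paper's argument is shorter and avoids the three-way case split at the price of invoking an external lemma. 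One cosmetic repair: in the range $\eta<\abs{a}<\rho$ you end up with $\sup_{\abs{b}>r\eta}$, and $r\eta<\eta$, so the threshold at which the weighted seminorm drops below $L+\varepsilon$ should be chosen slightly below $\eta$ (or $r$ taken large enough that $r\eta$ exceeds it); this is trivial to arrange.
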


\begin{proof}
The lower estimate is obtained by an application
of the triangle inequality and the definition of $\LVMOA$ as in the
proof of Lemma~\ref{le:dist}.

The proof of the upper estimate follows the previous idea as well,
but requires a refined version of the subordination argument.
Indeed, we again approximate $g$ by the functions $g_r(z)=g(rz)$
(belonging to $\LVMOA$ by \cite[Lemma 3.5]{Siskakis:voltos}) to get
the estimate
\begin{equation} \label{eq:distest} \begin{split}
   \dist(g,\LVMOA)
   \leq &\sup_{\abs{a}>\eta}
        \lambda(a)\norm{g\circ\sigma_a-g(a)}_{H^2} \\
        & + \sup_{r>\eta}\sup_{\abs{a}>\eta}
        \lambda(a)\norm{g_r\circ\sigma_a-g_r(a)}_{H^2}.
\end{split} \end{equation}
Applying the weighted subordination principle
of \cite[Prop.~2.3]{Laitila:weicob} to \eqref{eq:subord}, we obtain
\[
   \norm{g_r\circ\sigma_a - g_r(a)}_{H^2}
   \lesssim \norm{\psi_{r,a}}_{H^2}
            \norm{g\circ\sigma_{ra} - g(ra)}_{H^2}.
\]
A calculation shows that $\norm{\psi_{r,a}}_{H^2}^2 =
r^2(1-|a|^2)/(1-r^4|a|^2)$, so, for $r$ and $\abs{a}$ sufficiently
close to $1$, we have
\[ \begin{split}
   \lambda(a)\norm{g_r\circ\sigma_a - g_r(a)}_{H^2}
   &\lesssim \frac{\lambda(a)\sqrt{1-|a|^2}}{\sqrt{1-|ra|^2}}
      \norm{g\circ\sigma_{ra} - g(ra)}_{H^2}  \\
   &\le \lambda(ra) \norm{g\circ\sigma_{ra} - g(ra)}_{H^2}
\end{split} \]
because $\lambda(s)\sqrt{1-s^2}$ is decreasing on, say,
$\mathopen[\frac{9}{10},1\mathclose)$. Combining this with
\eqref{eq:distest} and
letting $\eta \to 1$ yields the required estimate.
\end{proof}

\section{Proof of Theorem~\ref{thm:Hardy}}
\label{sec:Hardy}

We will make use of the standard test functions in $H^p$,
$1\leq p < \infty$, defined by
\begin{equation} \label{eq:testf}
   f_a(z) = \biggl[ \frac{1-\abs{a}^2}{(1-\bar{a}z)^2} \biggr]^{1/p}
\end{equation}
for each $a \in \D$. Note that $\norm{f_a}_{H^p} = 1$.
For $p=2$ this is just the normalized
reproducing kernel of $H^2$.
According to a theorem of Aleman and Cima \cite[Thm~3]{AlemanCima},
there exists a constant $c_{p,q} > 0$ such that
\begin{equation} \label{eq:AlCi}
   \norm{T_g f_a}_{H^p} \geq c_{p,q}\norm{g\circ\sigma_a-g(a)}_{H^q}
\end{equation}
whenever $0 < q < p/2$ (for example, $q = p/4$).

In order to deal with the weak essential
norm of $T_g$ on $H^1$ a localization argument is needed.
Let $m$ be the normalized Lebesgue measure on the unit circle
$\T = \partial\D$.
We will utilize the classical Dunford--Pettis criterion
(see e.g.\ \cite[Thm 5.2.9]{AK}) which says that
a set $F \subset L^1(m)$ is relatively compact in the weak topology
of $L^1(m)$ if and only if it is uniformly integrable, i.e.\
\[
   \sup_{f \in F} \int_E \abs{f}\,dm \to 0
   \quad\text{as $m(E) \to 0$.}
\]
The application of this criterion in our setting
is based on the following lemma.

\begin{lemma} \label{le:Arc}
For non-zero $a \in \D$, let $I(a) =
\bigl\{e^{i\theta} : \abs{\theta-\arg a} < (1-\abs{a})^{1/6} \bigr\}$
and $f_a(z) = (1-\abs{a}^2)\big/(1-\bar{a}z)^2$.
Then
\[
   \lim_{\abs{a} \to 1} \int_{\T\setminus I(a)} \abs{T_gf_a}\,dm = 0.
\]
\end{lemma}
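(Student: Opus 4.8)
The plan is to estimate $\abs{T_g f_a(z)}$ pointwise on $\T \setminus I(a)$, using the fact that on this arc the denominator $\abs{1-\bar a z}$ stays bounded away from zero, and then integrate. First I would record the elementary bound that for $e^{i\theta} \in \T$ with $\abs{\theta - \arg a} \geq \delta$ one has $\abs{1-\bar a z} \gtrsim \delta$ (comparing $\abs{1-\bar a z}$ with $\abs{1-e^{i(\theta-\arg a)}} \simeq \abs{\theta - \arg a}$ when $\abs{a}$ is close to $1$); applying this with $\delta = (1-\abs a)^{1/6}$ gives $\abs{f_a(z)} = (1-\abs a^2)/\abs{1-\bar a z}^2 \lesssim (1-\abs a)/(1-\abs a)^{1/3} = (1-\abs a)^{2/3}$ uniformly on $\T \setminus I(a)$. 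So the test functions $f_a$ are uniformly small on the complement of the arc.

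Next I would bound $\abs{T_g f_a(z)}$ itself. Writing $T_g f_a(z) = \int_0^z f_a g' \, d\zeta$ along the radius, I would split into $\int_0^{z/2}$ and $\int_{z/2}^z$ (or similar), but more cleanly I expect to use a known pointwise growth estimate: since $g \in \BMOA \subset$ Bloch, we have $\abs{g'(\zeta)} \lesssim \norm{g}_* /(1-\abs{\zeta})$, and since $f_a$ is analytic with a convenient integral representation one gets a bound of the form $\abs{T_g f_a(z)} \lesssim \norm{g}_* \log\frac{e}{1-\abs{z}} \cdot \sup\{\abs{f_a(\zeta)} : \abs{\zeta}\le\abs z, \abs{\arg\zeta - \arg z}\text{ small}\}$, or alternatively the cruder estimate $\abs{T_g f_a(z)}\lesssim \norm{g}_*\,\norm{f_a}_{H^\infty(\text{near the radius to }z)} \log\frac1{1-\abs z}$. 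The honest route is probably: $\abs{T_gf_a(z)} \le \int_0^1 \abs{f_a(tz)}\,\abs{g'(tz)}\,\abs z\,dt \lesssim \norm g_* \int_0^1 \frac{(1-\abs a^2)^{1/2}}{\abs{1-\bar a tz}}\cdot\frac{dt}{1-t\abs z}$ after using $\abs{f_a}\le (1-\abs a^2)^{1/2}/\abs{1-\bar a tz}$ from \eqref{eq:testf} with $p$ replaced by the exponent $1$ there (here $f_a$ is the $p=1$ test function). This integral is estimable: for $z$ in a Stolz-type sector around a point of $\T\setminus I(a)$ the quantity $\abs{1-\bar a tz}$ is comparable to $\abs{1-\bar a z}+(1-t) \gtrsim (1-\abs a)^{1/6}$, which again controls things.

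Then I would integrate over $\T\setminus I(a)$. The delicate point is that $T_gf_a$ need not be small near $\T$ in general, so one cannot simply pull out an $L^\infty$ bound; instead I would combine the pointwise estimate above with the $H^1$ or $H^2$ norm control $\norm{T_gf_a}_{H^1} \le \norm{T_g}\,\norm{f_a}_{H^1} = \norm{T_g}$ on a thin annular collar $\{1-\varepsilon < \abs z < 1\}$ via a subharmonicity/maximal-function argument, while using the clean pointwise bound on the bulk. Concretely: fix small $\varepsilon>0$; on the part of the integral coming from radii $\abs z \le 1-\varepsilon$ the pointwise estimate gives a bound $\lesssim_\varepsilon (1-\abs a)^{2/3}\log\frac1\varepsilon \to 0$; for the collar near $\T$ one uses that the arc $\T\setminus I(a)$ is covered by boundedly many translates on which $\abs{1-\bar a z}\gtrsim (1-\abs a)^{1/6}$ persists into the collar, so the Poisson-type bound $\abs{T_gf_a(z)}^2 \lesssim \int_\T \frac{1-\abs z^2}{\abs{1-\bar\zeta z}^2}\abs{T_gf_a(\zeta)}^2\,dm(\zeta)$ together with $\norm{T_gf_a}_{H^2}\lesssim \norm{g\circ\sigma_a - g(a)}_{H^2}$... — actually the cleanest is: $T_gf_a \in H^1$ with $\norm{T_gf_a}_{H^1}\lesssim\norm g_*$, and by \eqref{eq:AlCi}-type reasoning the boundary function is controlled, so $\int_{\T\setminus I(a)}\abs{T_gf_a}\,dm$ over the collar contribution is dominated by the non-tangential maximal function estimate restricted to the complement of the (shrinking) arc, which tends to $0$ because the radial limit functions of $T_gf_a$ concentrate near $e^{i\arg a}$.

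\textbf{Main obstacle.} The hard part is showing that the boundary values of $T_g f_a$ do not carry significant $L^1$ mass outside the shrinking arc $I(a)$: the pointwise bound degrades as $\abs z\to 1$, so one must genuinely use that $f_a$ concentrates near $e^{i\arg a}$ together with the smoothing effect of integration against the slowly-growing $g'$. I expect to resolve this by a two-region split (deep interior versus boundary collar) with the interior handled by the crude pointwise bound and the collar handled by a non-tangential maximal function / $H^1$-norm estimate on $T_g f_a$, exploiting that on $\T\setminus I(a)$ the relevant kernel stays comparable to $(1-\abs a)^{1/6}$ away from the singularity. Choosing the exponent $1/6$ in the arclength is exactly what makes both pieces tend to zero simultaneously, which is presumably why the authors picked it.
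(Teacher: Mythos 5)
There is a genuine gap, and it is precisely at the point you flag as the ``main obstacle.'' Your first step is fine and matches the paper: on the rays ending in $\T\setminus I(a)$ one has $\abs{1-\bar a r e^{i\theta}}\gtrsim\abs{\theta-\arg a}\geq(1-\abs a)^{1/6}$ \emph{uniformly in $r$}, hence $\abs{f_a(r\zeta)}\lesssim(1-\abs a)^{2/3}$ along the whole radius. But your next step --- bounding $\abs{T_gf_a(\zeta)}\le\int_0^1\abs{f_a(r\zeta)}\,\abs{g'(r\zeta)}\,dr$ and inserting the Bloch bound $\abs{g'(r\zeta)}\lesssim\norm{g}_*/(1-r)$ --- cannot close, because $\int_0^1 dr/(1-r)$ diverges; no choice of the exponent in $\abs{I(a)}$ fixes this, since the divergence comes from the boundary end of the radius where $\abs{f_a(r\zeta)}$ is merely small but not decaying. (Incidentally, your intermediate bound $\abs{f_a}\le(1-\abs a^2)^{1/2}/\abs{1-\bar a tz}$ is not correct for the $p=1$ test function, whose modulus is $(1-\abs a^2)/\abs{1-\bar a tz}^2$, but that is a side issue.) Your proposed repair --- splitting off a collar $1-\varepsilon<\abs z<1$ and handling it by an $H^1$/non-tangential maximal function bound ``because the radial limit functions of $T_gf_a$ concentrate near $e^{i\arg a}$'' --- is circular: that concentration of boundary $L^1$-mass on $I(a)$ is exactly the statement of the lemma, and the uniform bound $\norm{T_gf_a}_{H^1}\lesssim\norm{g}_*$ by itself gives no smallness on the complementary arc.

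The missing idea in the paper is an integration by parts along the radius, which moves the derivative off $g$ and onto $f_a$: for a.e.\ $\zeta\in\T\setminus I(a)$, $\abs{T_gf_a(\zeta)}\le\abs{f_a(\zeta)g(\zeta)}+\int_0^1\abs{f_a'(r\zeta)}\,\abs{g(r\zeta)}\,dr$. This requires the companion estimate $\abs{f_a'(r\zeta)}\lesssim(1-a)/\abs{\theta}^3\le(1-a)^{1/2}$, which you never consider, and it replaces the non-integrable $\abs{g'}$ by $\abs{g}$ itself, which (since $\BMOA\subset\Bloch$) grows only logarithmically, so $\int_0^1\abs{g(r\zeta)}\,dr\lesssim\norm{g}_*$. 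Integrating the resulting pointwise bound over $\T\setminus I(a)$ gives $\lesssim(1-a)^{2/3}\norm{g}_{H^1}+(1-a)^{1/2}\norm{g}_*\to0$, with no collar decomposition needed. So the two uniform estimates on $f_a$ and $f_a'$ along the whole radius, combined with integration by parts, are what make the exponent $1/6$ work; without that device your argument stalls at a divergent integral.
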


\begin{proof}
We may assume $0 < a < 1$ (by rotation-invariance) and
$g(0)=0$. It is easy to check that
$\abs{1 - are^{i\theta}} \geq c\abs{\theta}$ for all
$0 \leq r < 1$ and $\abs{\theta} \leq \pi$, where $c > 0$ is
an absolute constant. Thus, for $0 \leq r < 1$ and
$(1-\abs{a})^{1/6} \leq \abs{\theta} \leq \pi$, we have the
uniform estimates
\begin{align*}
   \abs{f_a(re^{i\theta})}
      &\lesssim \frac{1 - a}{\abs{1 - are^{i\theta}}^2}
      \lesssim \frac{1 - a}{\abs{\theta}^2} \leq (1 - a)^{2/3},
\\
   \abs{f_a'(re^{i\theta})}
      &\lesssim \frac{1 - a}{\abs{1 - are^{i\theta}}^3}
      \lesssim \frac{1 - a}{\abs{\theta}^3} \leq (1 - a)^{1/2}.
\end{align*}
The functions $g$ and $T_gf_a$ have radial limits at almost every
point of $\T$. Therefore, for a.e.\ $\zeta \in \T \setminus I(a)$,
we may use integration by parts and the above estimates to get
\[ \begin{split}
   \abs{T_gf_a(\zeta)}
   &= \biggabs{\int_0^1 f_a(r\zeta) g'(r\zeta) \zeta \,dr}
   \le \abs{f_a(\zeta)g(\zeta)} +
       \int_0^1 \abs{f_a'(r\zeta)g(r\zeta)} \,dr \\
   &\lesssim (1 - a)^{2/3} \abs{g(\zeta)} +
       (1 - a)^{1/2} \int_0^1 \abs{g(r\zeta)} \,dr. 
\end{split} \]
Since $\BMOA$ is contained in the Bloch space and
consequently $g$ has at most logarithmic growth, the last
integral here is bounded by a constant multiple of $\norm{g}_*$.
Hence
\[
   \int_{\T\setminus I(a)} \abs{T_g f_a} \,dm
   \lesssim (1 - a)^{2/3} \norm{g}_{H^1} + (1 - a)^{1/2} \norm{g}_*.
\]
This yields the required result.
\end{proof}

\begin{proof}[Proof of Theorem~\ref{thm:Hardy}]
For every $1 \leq p < \infty$, the upper estimate for
$\norm{T_g}_e$ follows easily from the linearity
of $T_g$ with respect to $g$. Indeed, for each $h \in \VMOA$,
the operator $T_h$ is compact and hence
$\norm{T_g}_e \leq \norm{T_g-T_h} = \norm{T_{g-h}}
\simeq \norm{g-h}_*$ by \cite[Thm~1]{Aleman:intoh}.

To establish the lower estimates, we first consider the case
$1 < p < \infty$. Define functions $f_a$ by \eqref{eq:testf}.
Since $f_a \to 0$ weakly in $H^p$ as $\abs{a} \to 1$,
for every compact operator $K$ on $H^p$ we have
$\norm{Kf_a}_{H^p} \to 0$. Consequently
\[
   \norm{T_g}_e
   \geq \limsup_{\abs{a}\to 1} \norm{T_g f_a}_{H^p}
   \gtrsim \limsup_{\abs{a}\to 1}
      \norm{g\circ\sigma_a - g(a)}_{H^{p/4}},
\]
where the last estimate follows from \eqref{eq:AlCi}.
But the right-hand side here is equivalent to
$\dist(g,\VMOA)$ by Lemma~\ref{le:dist}.

We finally consider the case $p=1$ and derive the lower bound
for $\norm{T_g}_w$ on $H^1$. Let $S$ be an arbitrary weakly
compact operator on $H^1$ and,
as before, consider the test functions
$f_a(z) = (1-\abs{a}^2)\big/(1-\bar{a}z)^2$ for $a \in \D$.
Since $\norm{f_a}_{H^1} = 1$, we have the estimate
\[
   \norm{T_g}_w \geq \norm{(T_g-S)f_a}_{H^1}
      \geq \int_{I(a)} \abs{T_gf_a}\,dm -
           \int_{I(a)} \abs{Sf_a}\,dm,
\]
where $I(a)$ is the arc of Lemma~\ref{le:Arc}.
Since the set $\{Sf_a : a\in\D\}$ is relatively weakly compact
in $H^1$ and hence uniformly integrable in $L^1(m)$, the last
integral on the right-hand side tends to zero as $\abs{a} \to 1$.
Hence Lemma~\ref{le:Arc} yields
$\norm{T_g}_w \geq \limsup_{\abs{a}\to 1} \norm{T_gf_a}_{H^1}$.
The argument is then finished as above. 
\end{proof}

\section{Proof of Theorem \ref{thm_bmoa}}
\label{sec:BMOA}

We start by recalling
that $\BMOA$ functions admit a characterization in terms of
Carleson measures (see e.g.\ \cite[VI.3]{Garnett:bouaf}).
For any arc $I \subset \T$, write $\abs{I} = m(I)$
and let $S(I)=\{z\in\D : 1-\abs{z} < |I|,\: z/\abs{z}\in I\}$ denote
the Carleson window determined by $I$. Given an analytic function
$g\colon \D \to \C$, define
\[
   \mu(g,I) = \int_{S(I)} \abs{g'(z)}^2(1-\abs{z}^2)\,dA(z),
\]
where $A$ denotes the normalized Lebesgue area measure on $\D$. Then
\begin{equation}\label{eq_bmoa_char}
   \norm{g}_* \simeq \sup_{I\subset\T}
      \biggl(\frac{\mu(g,I)}{\abs{I}}\biggr)^{1/2}
\end{equation}
with the understanding that $g \in \BMOA$ if and only if
the right-hand side is finite. Also,
\begin{align}\label{eq_vmoa_char}
   g \in \VMOA \quad\Leftrightarrow\quad
   \lim_{\abs{I}\to 0}\frac{\mu(g,I)}{\abs{I}}=0.
\end{align}
Furthermore, for logarithmic $\BMOA$ we have the following equivalence
that is contained in the proof of \cite[Lemma 3.4]{Siskakis:voltos}:
\begin{equation} \label{eq:LMOAequiv}
   \limsup_{\abs{a}\to 1}
   \lambda(a)\norm{g\circ\sigma_a - g(a)}_{H^2}
   \simeq \limsup_{\abs{I}\to 0}
      \biggl(\log\frac{2}{\abs{I}}\biggr)
      \biggl(\frac{\mu(g,I)}{\abs{I}}\biggr)^{1/2}.
\end{equation}
In view of Lemma~\ref{le:logdist}, this gives another estimate
for the distance of a function $g \in \LMOA$ from $\LVMOA$.

A key tool in the proof of Theorem~\ref{thm_bmoa} is an idea of
Le{\u\i}bov~\cite{Leibov:subvs} on how to construct isomorphic
copies of the sequence space $c_0$ inside $\VMOA$.
As usual, here $c_0$ denotes the Banach space of all complex
sequences converging to zero equipped with the supremum norm.
The following reformulation of Le{\u\i}bov's result
is taken from~\cite{Laitila:comwcc}.

\begin{lemma}[{\cite[Prop.~6]{Laitila:comwcc}}]\label{lemma_leibov}
Let $(f_n)$ be a sequence in $\VMOA$ such that $\norm{f_n}_* \simeq 1$
and $\norm{f_n}_{H^2}\to 0$ as $n\to \infty$. Then there is a
subsequence $(f_{n_j})$ which is equivalent to the natural basis of
$c_0$; that is, the map
$\iota\colon (\lambda_j) \to \sum_j \lambda_j f_{n_j}$ is an
isomorphism from $c_0$ into $\VMOA$.
\end{lemma}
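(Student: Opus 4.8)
The plan is to produce the subsequence by a sliding-hump argument based on the Carleson-measure characterization \eqref{eq_bmoa_char}. Throughout write $\mu(f,I)=\int_{S(I)}\abs{f'}^2(1-\abs{z}^2)\,dA$ and recall the standard facts that $\mu(f,I)/\abs{I}\lesssim\norm{f}_*^2$ for every arc $I$ and that $\int_{\D}\abs{f'}^2(1-\abs{z}^2)\,dA\lesssim\norm{f}_{H^2}^2$. The target is the two-sided estimate
\[
   \sup_j\abs{\lambda_j}\ \lesssim\ \bignorm{\textstyle\sum_j\lambda_j f_{n_j}}_*\ \lesssim\ \sup_j\abs{\lambda_j},
   \qquad (\lambda_j)\in c_0,
\]
which --- once $\sum_j\abs{f_{n_j}(0)}<\infty$ is arranged, so that the point-evaluation part of the $\BMOA$ norm is harmless, and since $\VMOA$ is closed in $\BMOA$ --- shows that $\iota$ is a well-defined isomorphism from $c_0$ into $\VMOA$.

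The structural point behind the construction is that a function $f\in\VMOA$ with small $H^2$-norm has its Carleson mass trapped at intermediate scales: because $\mu(f,I)\lesssim\norm{f}_{H^2}^2$, the ratio $\mu(f,I)/\abs{I}$ is small as soon as $\abs{I}\gtrsim\norm{f}_{H^2}^{1/2}$, while $f\in\VMOA$ and \eqref{eq_vmoa_char} make it small once $\abs{I}$ drops below some fixed positive threshold. I would choose $n_1<n_2<\cdots$ inductively so that $\norm{f_{n_j}}_{H^2}$ is extremely small relative to all data accumulated at the earlier steps. At step $j$ I record a test arc $I_j$ with $\mu(f_{n_j},I_j)/\abs{I_j}\gtrsim\norm{f_{n_j}}_*^2\simeq 1$ (such an arc exists since $\norm{f_{n_j}}_*\simeq1$, and automatically satisfies $\abs{I_j}\lesssim\norm{f_{n_j}}_{H^2}^2\to 0$), and a scale-band $[a_j,b_j]$, where $b_j=\norm{f_{n_j}}_{H^2}^{1/2}$ and $a_j>0$ is supplied by \eqref{eq_vmoa_char}, outside of which $\mu(f_{n_j},\cdot)/\abs{\,\cdot\,}$ is negligible. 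Smallness of $\norm{f_{n_j}}_{H^2}$ lets me force, all at once: $b_{j+1}<\min(a_j,b_j)$, so that the bands $[a_j,b_j]$ are pairwise disjoint and shrink to $0$; $\norm{f_{n_j}}_{H^2}^{3/2}<4^{-j}$; and $\mu(f_{n_k},I_j)/\abs{I_j}<2^{-\max(j,k)}$ for all $k\ne j$ (using $\abs{I_j}<a_k$ for $k<j$ and $\norm{f_{n_j}}_{H^2}^2<2^{-j}\abs{I_k}$ for $k<j$).

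The lower estimate is then immediate. Given $(\lambda_j)\in c_0$, choose $k$ with $\abs{\lambda_k}\ge\tfrac12\sup_j\abs{\lambda_j}$ and estimate from below in $L^2\bigl(S(I_k),(1-\abs{z}^2)\,dA\bigr)$: the triangle inequality gives $\mu\bigl(\sum_j\lambda_j f_{n_j},I_k\bigr)^{1/2}\ge\abs{\lambda_k}\mu(f_{n_k},I_k)^{1/2}-\sup_j\abs{\lambda_j}\sum_{j\ne k}\mu(f_{n_j},I_k)^{1/2}\gtrsim\abs{\lambda_k}\abs{I_k}^{1/2}$, the error being absorbed because $\sum_{j\ne k}2^{-\max(j,k)/2}$ is as small as we wish. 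Hence $\norm{\sum_j\lambda_j f_{n_j}}_*\gtrsim\sup_j\abs{\lambda_j}$ by \eqref{eq_bmoa_char}. For the upper estimate fix an arc $I$ with $\abs{I}=t$. At most one band $[a_j,b_j]$ contains $t$, say $[a_{j_0},b_{j_0}]$, so in $\bigl(\mu(\sum_j\lambda_j f_{n_j},I)/t\bigr)^{1/2}\le\sup_j\abs{\lambda_j}\sum_j\bigl(\mu(f_{n_j},I)/t\bigr)^{1/2}$ the $j_0$-summand is $O(1)$ while every other is $\lesssim 2^{-j/2}$ (if $t<a_j$) or $\lesssim 4^{-j/2}$ (if $t>b_j$); the sum is $\lesssim 1$ uniformly in $I$, and taking the supremum over $I$ gives $\norm{\sum_j\lambda_j f_{n_j}}_*\lesssim\sup_j\abs{\lambda_j}$. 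Applied to tails, the same bound shows the series defining $\iota(\lambda)$ converges in $\BMOA$, with limit in $\VMOA$.

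The main obstacle is the inductive bookkeeping of the second paragraph: one must genuinely be able to choose the subsequence so that the scale-bands $[a_j,b_j]$ are pairwise disjoint (with $a_j>0$ coming from $f_{n_j}\in\VMOA$ and $b_j\to0$ from smallness of $\norm{f_{n_j}}_{H^2}$) and the cross-interactions $\mu(f_{n_k},I_j)$ summably negligible. Once this ``asymptotic disjointness'' of the $f_{n_j}$ in the Carleson-window sense is in place, both $c_0$-estimates reduce to the triangle inequality in the weighted $L^2$ space over a single window $S(I)$, which effectively sees only one of the functions at a time.
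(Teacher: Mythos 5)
The paper itself offers no proof of this lemma---it is quoted from \cite[Prop.~6]{Laitila:comwcc}, going back to Le{\u\i}bov---so there is no in-paper argument to compare against; what you have written is essentially the standard gliding-hump proof of the cited result, and it is correct in its main steps. The key ingredients are all present and right: the Littlewood--Paley bound $\mu(f,I)\lesssim\norm{f}_{H^2}^2$ traps the Carleson mass of each $f_{n_j}$ in a band of scales $[a_j,b_j]$ with $a_j$ supplied by \eqref{eq_vmoa_char} and $b_j=\norm{f_{n_j}}_{H^2}^{1/2}$; a test arc $I_j$ with $\mu(f_{n_j},I_j)/\abs{I_j}\gtrsim 1$ exists by \eqref{eq_bmoa_char} and automatically has $\abs{I_j}\lesssim\norm{f_{n_j}}_{H^2}^2$; and since $\mu(\cdot,I)^{1/2}$ is a seminorm on each window, both $c_0$-estimates reduce to the triangle inequality once the bands are disjoint and the cross-terms are summably small. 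The one place to tighten is the lower estimate: with the cross-interaction bound frozen at $\mu(f_{n_k},I_j)/\abs{I_j}<2^{-\max(j,k)}$, the error $\sum_{j\ne k}2^{-\max(j,k)/2}$ is bounded by an absolute constant but is \emph{not} ``as small as we wish,'' and it must be beaten by the fixed (possibly small) constant $c$ in $\mu(f_{n_k},I_k)/\abs{I_k}\ge c$. This is not a genuine gap---each inductive step imposes only finitely many conditions, so you may just as well demand $\mu(f_{n_k},I_j)/\abs{I_j}<\varepsilon_0\,4^{-\max(j,k)}$ with $\varepsilon_0$ fixed in advance relative to $c$---but the decay rate should be calibrated before the construction rather than invoked afterwards. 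A similar remark applies to the thresholds $a_k$: a single ``negligibility level'' per function cannot yield $2^{-\max(j,k)}$ for all later $j$, so (as your parenthetical in effect does) the condition has to be imposed at step $j$ on $\abs{I_j}$ against each of the finitely many earlier $f_{n_k}$. With those quantifiers straightened out, the argument is complete, including the convergence of the series in $\BMOA$ via tails and the membership of the limit in the closed subspace $\VMOA$.
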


The utility of this lemma lies in the fact that
$c_0$ has the Dunford--Pettis property:\
every weakly compact linear operator from $c_0$
into any Banach space maps weak-null sequences into
norm-null sequences (see e.g.\ \cite[Sec.~5.4]{AK}).

After these preparations we are ready to carry out the proof
of Theorem~\ref{thm_bmoa}.

\begin{proof}[Proof of Theorem \ref{thm_bmoa}]
We first consider the case of $T_g$ acting on $\BMOA$.
Recall that $\norm{T_g}_w \le \norm{T_g}_e$.

To derive the upper estimate for $\norm{T_g}_e$, we argue as in
the proof of Theorem~\ref{thm:Hardy}:\ for each $h \in \LVMOA$,
the operator $T_h$ is compact on $\BMOA$ and hence
$\norm{T_g}_e \leq \norm{T_g-T_h} = \norm{T_{g-h}}
\simeq \norm{g-h}_{*,\log}$ by
Theorem~3.1 and Lemma~3.4 of \cite{Siskakis:voltos}.

To prove the lower estimate for $\norm{T_g}_w$, we first choose a
sequence $(I_n)_{n=1}^\infty$ of subarcs of $\T$ such that
$\abs{I_n} \to 0$ and
\[
   \lim_{n\to \infty} \biggl(\log\frac{2}{\abs{I_n}}\biggr)^2
   \frac{\mu(g,I_n)}{\abs{I_n}} =
   \limsup_{\abs{I}\to 0}\biggl(\log\frac{2}{\abs{I}}\biggr)^2
   \frac{\mu(g,I)}{\abs{I}} \equiv \alpha.
\]
In view of Lemma~\ref{le:logdist}
and equivalence \eqref{eq:LMOAequiv} it is enough to show that
$\norm{T_g}_w \gtrsim \sqrt{\alpha}$.
Note that $\alpha$ is finite because $g \in \LMOA$.

For $n \geq 1$, let $u_n=(1-\abs{I_n})\xi_n$ where $\xi_n$ is the
midpoint of $I_n$.
By passing to a subsequence if necessary, we may assume that
$(u_n)$ is a Cauchy sequence. Define $f_n(z) = \log(1-\overline{u_n}z)$
for $z \in \D$. A calculation shows that
\[
   \abs{f_n(z)} \ge c\log\frac{2}{\abs{I_n}},\qquad
   z \in S(I_n),
\]
for all $n \geq 1$ and a uniform constant $c>0$.
Hence
\begin{equation}\label{eq_estA2}
   \frac{\mu(T_g f_n,I_n)}{\abs{I_n}} =
   \frac{1}{\abs{I_n}}
   \int_{S(I_n)} \abs{f_n(z)}^2\abs{g'(z)}^2 (1-\abs{z}^2)\,dA(z)
   \ge \frac{c\alpha}{2^2}
\end{equation}
for $n$ larger than some (henceforth fixed) $N \geq 1$.

It is known that $(f_n)$ is a bounded sequence in $\BMOA$.
In fact, since $f_n$ extends continuously to $\overline\D$, we have
$f_n\in\VMOA$ and consequently $T_g f_n \in \VMOA$ because
$T_g$ maps $\VMOA$ into itself. Hence, by
applying \eqref{eq_vmoa_char} and passing to a further subsequence
if necessary, we may assume that
\begin{equation}\label{eq_estB2}
   \frac{\mu(T_g f_n,I_{n+1})}{\abs{I_{n+1}}} \le \frac{c\alpha}{4^2}
\end{equation}
for all $n \geq 1$.

Let $h_n=f_{n+1}-f_n$. Then, by combining
\eqref{eq_estA2} and \eqref{eq_estB2} and
applying the triangle inequality, we get, for $n \geq N$,
\begin{equation}\label{eq_conseq1}
   \frac{c\alpha}{4^2}
   \le \frac{\mu(T_g (f_{n+1}-f_n),I_{n+1})}{\abs{I_{n+1}}}
   \le C\norm{T_g h_n}_*^2,
\end{equation}
where $C> 0$ is a constant that stems from
\eqref{eq_bmoa_char}. Thus
$\norm{h_n}_*^2 \ge c\alpha/16C\norm{T_g} > 0$.
On the other hand,
\[
   \norm{h_n}_{H^2}^2 =
   \sum_{k=1}^{\infty}\frac{\abs{u_n^k-u_{n+1}^k}^2}{k^2}\to 0
\]
as $n\to \infty$. Therefore, by Lemma~\ref{lemma_leibov},
there is a subsequence $(h_{n_j})$ such that the map
$\iota\colon (\lambda_j)\mapsto \sum_{j=1}^{\infty}\lambda_j h_{n_j}$
is an isomorphism from $c_0$ into $\BMOA$.

Let now $S$ be any weakly compact operator on $\BMOA$. Then
$S\circ\iota$ is weakly compact from $c_0$ to $\BMOA$ and
since the standard unit vector basis $(e_j)$ of $c_0$ converges to
zero weakly in $c_0$, the Dunford--Pettis property of $c_0$ implies
$\norm{S h_{n_j}}_* = \norm{(S \circ \iota)e_j}_* \to 0$
as $j\to\infty$. Since $(h_n)$ is bounded in $\BMOA$, we have,
by \eqref{eq_conseq1},
\[
   \norm{T_g - S} \gtrsim \norm{T_g h_{n_j} - S h_{n_j}}_*
   \ge \tfrac{1}{4}\sqrt{\!c\alpha/C} - \norm{S h_{n_j}}_*.
\]
This yields that
$\norm{T_g-S} \gtrsim \sqrt{\alpha}$ as $j\to\infty$.
Hence $\norm{T_g}_w \gtrsim \sqrt{\alpha}$ and the proof of the
lower estimate is complete.

Finally consider $T_g$ as an operator on $\VMOA$.
The upper estimate for $\norm{T_g}_e$ is obtained
exactly as in the $\BMOA$ case because the compact approximants
$T_h$, $h \in \LVMOA$, take $\VMOA$ into itself.
Moreover, since the test functions $f_n$ and $h_n$ above
belong to $\VMOA$, our argument for the lower estimate
of $\norm{T_g}_w$ works in the $\VMOA$ case as well.  This finishes
the proof of Theorem~\ref{thm_bmoa}.
\end{proof}

\begin{ackn}
Part of this research was carried out when the second and third
authors were visiting Lund University in Sweden in 2010. They
acknowledge the hospitality of the Centre for Mathematical Sciences,
and thank Alexandru Aleman and Karl-Mikael Perfekt
for useful remarks.
\end{ackn}


\bibliographystyle{amsplain}

\end{document}